\documentclass[final]{dmtcs-episciences}
\usepackage[utf8]{inputenc}
\usepackage{graphicx,amsmath,amsfonts,amsthm,amssymb,paralist}
\hypersetup{%
	colorlinks,
	linkcolor={black},
	citecolor={black},
	urlcolor={black},
	pdftitle={Separating Layered Treewidth and Row Treewidth},
	pdfauthor={Prosenjit Bose, Vida Dujmović, Mehrnoosh Javarsineh, Pat Morin, David R. Wood}} 
\usepackage[noabbrev,capitalise]{cleveref}
\usepackage[usenames,dvipsnames,svgnames,table]{xcolor}
\usepackage[shortlabels]{enumitem}
\setlist[itemize]{topsep=0ex,itemsep=0ex,parsep=0.4ex}
\setlist[enumerate]{topsep=0ex,itemsep=0ex,parsep=0.4ex}
\usepackage[longnamesfirst,numbers,sort&compress]{natbib}
\let\ge\geqslant
\let\leq\leqslant
\let\geq\geqslant

\let\le\leqslant

\DeclareMathOperator{\qn}{qn}
\DeclareMathOperator{\tw}{tw}
\DeclareMathOperator{\ltw}{ltw}
\DeclareMathOperator{\rtw}{rtw}
\DeclareMathOperator{\pw}{pw}
\DeclareMathOperator{\lpw}{lpw}
\DeclareMathOperator{\rpw}{rpw}
\newcommand{\GG}{\mathcal{G}}
\newcommand{\PP}{P_{\hspace{-0.2ex}\infty}}
\newcommand{\CartProd}{\mathbin{\square}}
\theoremstyle{plain}
\newtheorem{thm}{Theorem}
\crefname{thm}{Theorem}{Theorem}
\newtheorem{lem}[thm]{Lemma}
\crefname{lem}{Lemma}{Lemma}
\newtheorem{cor}[thm]{Corollary}
\crefname{cor}{Corollary}{Corollary}
\newtheorem{obs}[thm]{Observation}
\crefname{obs}{Observation}{Observation}
\newcommand{\ceil}[1]{{\lceil #1 \rceil}}

\newcommand{\N}{\mathbb{N}}
\newcommand{\defn}[1]{\textcolor{Maroon}{\emph{#1}}}
\title[Separating Layered Treewidth and Row Treewidth]{Separating Layered Treewidth and \\
	Row Treewidth\thanks{Research of Bose, Dujmovi\'c and Morin supported by NSERC. Research of Wood supported by the Australian Research Council.}}
\author{%
Prosenjit Bose\affiliationmark{1} \and 
Vida Dujmović\,\affiliationmark{2} \and
Mehrnoosh Javarsineh\affiliationmark{1}  \newline
\and Pat Morin\affiliationmark{1} \and 
David~R.~Wood\affiliationmark{3}}
\affiliation{
School of Computer Science, Carleton University, Ottawa, Canada\\
School of Computer Science and Electrical Engineering, University of Ottawa, Ottawa, Canada\\
School of Mathematics, Monash University, Melbourne, Australia}
\keywords{treewidth, layered treewidth, row treewidth}
\received{2021-05-05}
\revised{2021-11-24}
\accepted{2022-04-12}

\begin{document}
\publicationdetails{24}{2022}{1}{18}{7458}
\maketitle

\begin{abstract}
Layered treewidth and row treewidth are recently introduced graph parameters that have been  key ingredients in the solution of several well-known open problems. In particular, the \defn{layered treewidth} of a graph $G$ is the minimum integer $k$ such that $G$ has a tree-decomposition and a layering such that each bag has at most $k$ vertices in each layer. The \defn{row treewidth} of $G$ is the minimum integer $k$ such that $G$ is isomorphic to a subgraph of $H\boxtimes P$ for some graph $H$ of treewidth at most $k$ and for some path $P$. It follows from the definitions that the layered treewidth of a graph is at most its row treewidth plus 1. Moreover, a minor-closed class has bounded layered treewidth if and only if it has bounded row treewidth. However, it has been open whether row treewidth is bounded by a function of layered treewidth. This paper answers this question in the negative. In particular, for every integer $k$ we describe a graph with layered treewidth 1 and row treewidth $k$. We also prove an analogous result for layered pathwidth and row pathwidth.
\end{abstract}

\section{Introduction}

Treewidth is a graph parameter that measures how similar a given graph is to a tree; it is of fundamental importance in structural and algorithmic graph theory; see the surveys \citep{Bodlaender-TCS98,Reed03,HW17}.

Layered treewidth is a variant of treewidth introduced independently by \citet{DMW17} and \citet{Shahrokhi13}; see \cref{Definitions} for the definition. A key property is that layered treewidth is bounded on planar graphs but treewidth is not. In particular, planar graphs have layered treewidth at most 3 \citep{DMW17}, but the $n\times n$ grid graph has treewidth $n$. Layered treewidth has been used in upper bounds on several graph parameters including queue-number~\citep{DMW17,DF18}, stack number~\citep{DF18}, boxicity~\citep{SW20}, clustered chromatic number~\citep{LW1}, generalised colouring numbers~\cite{vdHW18}, asymptotic dimension~\citep{BBEGLPS}, as well for results in intersection graph theory \citep{Shahrokhi13}.

Row treewidth\footnote{The name ``row treewidth'' is an original contribution of the present paper.} is a refinement of layered treewidth, implicitly introduced by \citet{DJMMUW20}, who proved that planar graphs have row treewidth at most 8. This result and its generalisations have been the key to solving several open problems, regarding queue-number \citep{DJMMUW20}, nonrepetitive chromatic number~\citep{DEJWW20}, universal graphs~\cite{EJM,BGP20,DEJGMM21}, centred colouring~\citep{DFMS21}, graph drawing~\citep{DJMMUW20,Pupyrev20}, and vertex ranking \citep{BDJM}.

Layered and row treewidth are closely related in that the layered treewidth of a graph is at most its row treewidth plus 1, and a minor-closed class has bounded layered treewidth if and only if it has bounded row treewidth. However, a fundamental open problem is whether row treewidth is bounded by a function of layered treewidth. This paper answers this question in the negative.

\begin{thm}
\label{sep-ltw-etw}
For each $k\in\N$ there is a graph with layered treewidth 1 and row treewidth $k$.
\end{thm}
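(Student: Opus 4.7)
The plan is to exhibit an explicit family $(G_k)_{k\in\N}$ and prove the two bounds separately: $\ltw(G_k)\le 1$ directly from the construction, and $\rtw(G_k)\ge k$ by a structural argument against any putative embedding of $G_k$ into $H\boxtimes P$.

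For the construction I would take a long backbone path $v_0 v_1 \cdots v_N$, with $N=N(k)$ chosen later, and, for each pair $(v_i,v_j)$ in a carefully designed index set $\mathcal{S}_k\subseteq\binom{\{0,\ldots,N\}}{2}$, attach an internally vertex-disjoint path from $v_i$ to $v_j$ of length exactly $j-i$ whose internal vertices are fresh. The length-matching is essential: placing the $t$-th internal vertex of each added path on layer $i+t$ turns the natural layering $V_\ell = \{\,\text{vertices at layer } \ell\,\}$ into a bona fide layering of $G_k$, with every edge between consecutive layers.

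To see $\ltw(G_k) \le 1$, I would display one tree decomposition: a central spine bag equal to the whole backbone $\{v_0,\ldots,v_N\}$, and one leaf bag per added path, each equal to the vertex sequence of that path. Every bag uses at most one vertex per layer, so the layered width is $1$. The only vertices appearing in more than one bag are the backbone vertices $v_i$; for each such $v_i$, the bags containing it form a star centred at the spine, hence a connected subtree, so the tree-decomposition axioms are satisfied.

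For the lower bound, suppose for contradiction that $G_k \subseteq H \boxtimes P$ with $\tw(H) \le k-1$. Writing each vertex of $G_k$ as a pair $(h,p)$, the backbone vertex $v_i$ has some row $h_i\in V(H)$, and each added $v_iv_j$-path of length $j-i$ projects to a walk of exactly that length from $h_i$ to $h_j$ in $H$. The design of $\mathcal{S}_k$ should ensure that, however the embedding is chosen, the ensemble of these forced walks realises a $K_k$-minor or a bramble of order $k$ in $H$, contradicting $\tw(H)\le k-1$. The main obstacle is precisely the design of $\mathcal{S}_k$ and the corresponding minor-extraction argument: a naive choice of $\mathcal{S}_k$ permits too much collapsing of distinct index pairs onto the same row of $H$, so $\mathcal{S}_k$ must be combinatorially rigid (for instance drawn from a high-girth bipartite incidence structure or an expander-like pattern), and the extraction will likely combine a pigeonhole partition of indices by embedding type with a Menger-style argument turning the induced walks into internally disjoint routes in $H$, hence into a treewidth-$k$ obstruction there.
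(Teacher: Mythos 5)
Your upper-bound half is fine: a backbone-path construction with attached paths whose lengths match the layer differences is essentially a subdivision tuned to a layering, and the spine-plus-leaf-bags decomposition does give layered width $1$. This mirrors, in a more hands-on way, what the paper does via \cref{LayeredTreewidthSubdivision}(a) and \cref{TreewidthSubdivision}. The problem is that the lower bound $\rtw(G_k)\ge k$ --- which is the entire content of the theorem --- is not proved: you explicitly defer both the definition of $\mathcal{S}_k$ and the minor-extraction argument, and this is precisely where all the difficulty lies. Moreover, the one concrete step you do state is wrong: if $G_k\subseteq H\boxtimes P$, an edge of $G_k$ may join two vertices with the \emph{same} $H$-coordinate (moving only along $P$), so an added $v_iv_j$-path of length $j-i$ does \emph{not} project to a walk of length $j-i$ in $H$; its projection is a walk in which consecutive vertices are adjacent or equal, and it can collapse to a single vertex of $H$. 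So nothing is ``forced'' in $H$ by path lengths alone, and a pigeonhole-plus-Menger extraction has no obvious starting point.

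There is also a structural reason to doubt that the backbone construction can be made to work as described. Your chord-paths have length $j-i$, which grows with $N$, so $G_k$ is an arbitrarily heavily subdivided copy of the base graph (backbone plus chords $\mathcal{S}_k$); but the quantifier order matters here. The paper's key lemma (\cref{LargeRowTreewidthSubdiv}) fixes the subdivision bound $s$ \emph{first} and then builds a graph $G_{s,k,w}$, recursively, by taking a huge number $N$ of copies of the previous graph together with a hub vertex joined to every vertex by $N$ parallel length-$2$ paths; the lower bound is then an induction (\cref{IndHyp}) that uses the bounded radius of the subdivision to bound the number of layers and the size of each part, and uses the massive redundancy to find a copy and paths avoiding all previously used parts, producing a $K_{k+1}$ model in $H$ that respects the partition. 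In your construction the subdivision lengths are not controlled relative to the rest of the design, which gives the embedding far more freedom, and no analogue of the counting/induction is supplied. As it stands the proposal establishes only $\ltw(G_k)\le 1$; the separation itself remains unproved.
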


This result is proved in \cref{sep-tw}. In \cref{Queues} we use a key lemma from the proof of \cref{sep-ltw-etw} to prove a result about the relationship between layered treewidth and queue-number. 

Layered pathwidth is a graph parameter analogous to layered treewidth, first studied by \citet{bannister.devanny.ea:track} and \citet{DEJMW20}. Row pathwidth is defined in a similar way to row treewidth. We prove the following analogue of \cref{sep-ltw-etw} for layered pathwidth and row pathwidth.

\begin{thm}
\label{sep-lpw-epw}
For each $k\in\N$ there is a tree with layered pathwidth 1 and row pathwidth $k$.
\end{thm}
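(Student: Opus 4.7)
The first half comes for free: every nonempty tree $T$ has layered pathwidth $1$. Fix any root $r$, take the BFS layering from $r$, enumerate the leaves $\ell_1,\dots,\ell_m$ in the left-to-right order of a plane embedding of $T$, and let $B_j$ be the root-to-$\ell_j$ path. Each $B_j$ contains exactly one vertex per layer. This gives a path decomposition of $T$: every edge $uv$ with $u$ the parent of $v$ lies in every $B_j$ for which $\ell_j$ is a descendant of $v$, and for any fixed vertex $v$ the descendant-leaves of $v$ are consecutive in left-to-right order, so $\{j : v \in B_j\}$ is an interval. Consequently the entire content of \cref{sep-lpw-epw} is the row-pathwidth lower bound.

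For the construction, my candidate for $T_k$ is a complete $d$-ary tree of height $h(k)$, with $d$ and $h(k)$ chosen sufficiently large in terms of $k$. The intuition is that a bushy enough and deep enough tree cannot be packed as a subgraph of the ``width-$k$ strip'' $H\boxtimes P$ when $\pw(H)\le k-1$; pathwidth of a complete $d$-ary tree grows with its height, which should drive the lower bound even though pathwidth alone is not an immediate obstruction.

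The main obstacle is the lower bound $\rpw(T_k)\ge k$. I would argue by contradiction: suppose $T_k \subseteq H\boxtimes P$ with $\pw(H)\le k-1$ and write each vertex $v$ of $T_k$ as a pair $(a(v),b(v))\in V(H)\times V(P)$. Two features of such an embedding should drive the proof. First, each edge of $T_k$ changes the $b$-coordinate by at most one (identifying $V(P)$ with $\mathbb{Z}$), so any root-to-vertex path of length $\ell$ occupies a window of at most $\ell+1$ consecutive $b$-values. Second, fixing a path decomposition of $H$ of width $k-1$ linearly orders the bags of $H$, and at any index along $P$ only $k$ rows of $H$ are ``active''. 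Combining these constraints with iterated pigeonhole on the branching structure of $T_k$ should force two distinct tree vertices to be mapped to the same $(a,b)$ pair, contradicting that $T_k$ is a subgraph of $H\boxtimes P$. Making this quantitative trade-off between branching and strip width precise is the hard step, and I would look for a tree-analogue of the ``key lemma'' used in \cref{sep-tw} to prove \cref{sep-ltw-etw}, transplanting treewidth estimates to pathwidth and taking advantage of the simpler, linear structure of the path decomposition of $H$ versus a general tree decomposition.
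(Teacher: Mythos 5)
Your first half is fine: the root-to-leaf-path decomposition indexed by leaves in left-to-right order does show $\lpw(T)=1$ for every tree (the paper simply cites \citet{bannister.devanny.ea:track} for this). The gap is in the second half, which is the entire content of the theorem, and it is not just a matter of ``making the trade-off precise'': the mechanism you propose cannot work as stated. You want a pigeonhole argument forcing two tree vertices onto the same pair $(a,b)\in V(H)\times V(P)$, using the claim that ``at any index along $P$ only $k$ rows of $H$ are active''. That claim is false: the hypothesis is $\pw(H)\le k-1$, which bounds the bags of a path decomposition of $H$ but puts no bound whatsoever on $|V(H)|$, so a single layer $\{(a,b):a\in V(H)\}$ can host arbitrarily many vertices of $T_k$. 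Indeed, all that counting within bags-times-layers yields is that $T_k$ has layered pathwidth at most $k$ --- which is vacuous, since every tree has layered pathwidth $1$; the whole point of the theorem is that this kind of counting cannot separate the parameters. So no collision can be forced by cardinality, and the hard step you defer is not a quantitative refinement of your sketch but a different argument altogether.

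What is actually needed is a structural lemma about the host graph $H$ itself. The paper (via \cref{ProductPartition}) translates $T_{d,h}\subseteq H\boxtimes\PP$ into an $H$-partition of $T_{d,h}$ with layered width $1$, and proves by induction on the height $h$ (\cref{tree-lemma}) that for $d$ large enough any such partition forces $H$ to contain a complete ternary tree $T_{3,h}$ as a subgraph; since $\pw(T_{3,h})=h$, this gives $\pw(H)\ge h$, hence $\rpw(T_{d,h})\ge h$ (in fact $=\pw(T_{d,h})=h$, \cref{tree-lower-bound}). The induction uses the bounded number of layers (the tree has radius $h$) to bound $|A_x|$, then a pigeonhole-plus-colouring argument on the intersection graph of the recursively obtained subtrees to find $\ell$ disjoint ones attached to the node containing the root --- this is the tree-analogue of the ``key lemma'' you were looking for, and it bounds the structure of $H$, not the multiplicity of vertices in a cell. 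The paper also notes a shorter route: bounded row pathwidth implies polynomial growth \citep{dvorak.huynh.ea:notes}, while complete binary trees grow exponentially, so $\rpw(T_{2,h})\to\infty$. Either way, your proposal as written is a plan with the decisive lemma missing and with its intended contradiction resting on an unjustified (and untrue) bound on the number of ``active rows'' per layer.
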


\section{Definitions}
\label{Definitions}

We consider finite, undirected, simple graphs and use standard graph theory terminology~\citep{diestel:graph}.

\subsubsection*{Minors}

A graph $H$ is a \defn{minor} of a graph $G$ if a graph isomorphic to $H$ can be obtained from a subgraph of $G$ by contracting edges. A graph class $\GG$ is \defn{minor-closed} if for every graph $G\in\GG$, every minor of $G$ is in $\GG$. A graph class $\GG$ is \defn{proper} if some graph is not in $\GG$. A \defn{$K_t$ model} in a graph $G$ is a set $\{X_1,\dots,X_t\}$ of pairwise-disjoint connected subgraphs in $G$, such that there is an edge of $G$ between $X_i$ and $X_j$ for all distinct $i,j\in\{1,\dots,t\}$. Clearly, $K_t$ is a minor of $G$ if and only if $G$ contains a $K_t$ model.

\subsubsection*{Treewidth and Pathwidth}

A \defn{tree decomposition} $\mathcal{T}$ of a graph $G$ is a collection $(B_x:x\in V(T))$ of subsets of $V(G)$ called \defn{bags} indexed by the nodes of a tree $T$ such that
\begin{inparaenum}[(i)]
	\item for each $v\in V(G)$, the induced subgraph $T[\{x\in V(T):v\in B_x\}]$ is nonempty and connected; and
	\item for each edge $vw\in E(G)$, there exists $x\in V(T)$ such that $\{v,w\}\subseteq B_x$.
\end{inparaenum}
The \defn{width} of a tree-decomposition is the size of its largest bag, minus $1$.  The \defn{treewidth $\tw(G)$} of a graph $G$ is the minimum width of any tree-decomposition of $G$. If $\mathcal{P}=(B_x:x\in V(P))$ is a tree-decomposition of $G$ and $P$ is a path, then $\mathcal{P}$ is a \defn{path-decomposition} of $G$.  The \defn{pathwidth $\pw(G)$} of a graph $G$ is the minimum width of any path-decomposition of $G$. For each $k\in\N$, the graphs with treewidth at most $k$ form a minor-closed class, as do the graphs with pathwidth at most $k$. Note that $\tw(K_n)=\pw(K_n)=n-1$.

\subsection*{Layered Treewidth and Pathwidth}

A \defn{layering} $\mathcal{L}$ of a graph $G$ is a partition of $V(G)$ into a sequence of sets $(L_0,L_1,\ldots)$ such that for any edge $vw\in E(G)$, if $v\in L_i$ and $w\in L_j$ then $|i-j|\le 1$. For example, if $r$ is a vertex in a connected graph $G$, and $L_i$ is the set of vertices at distance $i$ from $r$ in $G$, then $(L_0,L_1,\dots)$ is the  \defn{breadth-first} layering of $G$ rooted at $r$. A \defn{layered tree-decomposition} of a graph $G$ consists of a pair $(\mathcal{L},\mathcal{T})$ where $\mathcal{L}=(L_0,L_1,\ldots)$ is a layering of $G$ and $\mathcal{T}=(B_x:x\in V(T))$ is a tree-decomposition of $G$.  The (layered) \defn{width} of $(\mathcal{L},\mathcal{T})$ is the size of the largest intersection between a layer and a bag; that is, $\max\{|B_x\cap L_i|:x\in V(T), i\in\N_0\}$.  The \defn{layered treewidth $\ltw(G)$} of $G$ is the minimum width of any layered tree-decomposition of $G$.

This definition was introduced independently by \citet{Shahrokhi13} and \citet{DMW17}. The latter authors proved that $\ltw(G) \leq 3$ for every planar graph $G$; more generally that $\ltw(G) \leq 2g+3$ for every graph $G$ of Euler genus $g$; and that a minor-closed class $\GG$ has bounded layered treewidth if and only if some apex graph\footnote{A graph $G$ is \defn{apex} if $G-v$ is planar for some vertex $v$. A graph $G$ is an \defn{apex-forest} if $G-v$ is a forest for some vertex $v$.} is not in $\GG$. For an arbitrary proper minor-closed class $\GG$, \citet{DMW17} showed that every graph in $\GG$ has a tree-decomposition in which each bag has a bounded set of vertices whose deletion leaves a subgraph with bounded layered treewidth. This version of Robertson and Seymour's Graph Minor Structure Theorem has proved to be very useful \citep{LW1,LW2a,LW3,BBEGLPS}.

If $(\mathcal{L},\mathcal{P})$ is a layered tree-decomposition of $G$ and $\mathcal{P}$ is a path-decomposition, then $(\mathcal{L},\mathcal{P})$ is a \defn{layered path-decomposition} of $G$.  The \defn{layered pathwidth} of $G$ is the minimum width of any layered path-decomposition of $G$. This parameter was introduced by \citet{bannister.devanny.ea:track}, who proved that every outerplanar graph has layered pathwidth at most 2 (amongst other examples).

\subsection*{Row Treewidth and Pathwidth}

The \defn{cartesian product} of graphs $A$ and $B$, denoted by $A\CartProd B$, is the graph with vertex set $V(A)\times V(B)$, where distinct vertices $(v,x),(w,y)\in V(A)\times V(B)$ are adjacent if:
$v=w$ and $xy\in E(B)$; or
$x=y$ and $vw\in E(A)$.
The \defn{direct product} of $A$ and $B$, denoted by $A\times  B$, is the graph with vertex set $V(A)\times V(B)$, where distinct vertices $(v,x),(w,y)\in V(A)\times V(B)$ are adjacent if $vw\in E(A)$ and $xy\in E(B)$.
The \defn{strong product} of $A$ and $B$, denoted by $A\boxtimes B$, is the union of $A\CartProd B$ and $A\times B$.

The \defn{row treewidth $\rtw(G)$} of a graph $G$ is the minimum treewidth of a graph $H$ such that $G$ is isomorphic to a subgraph of $H\boxtimes \PP$,
where $\PP$ is the 1-way infinite path. This definition is implicit in the work of \citet{DJMMUW20} who proved that $\rtw(G) \leq 8$ for every planar graph $G$; more generally, that $\rtw(G) \leq 2g+9$ for every graph $G$ of Euler genus $g$; and that a minor-closed class $\GG$ has bounded row treewidth if and only if some apex graph is not in $\GG$. For an arbitrary minor-closed class $\GG$, \mbox{\citet{DJMMUW20}} showed that every graph in $\GG$ has a tree-decomposition in which each bag has a bounded set of vertices whose deletion leaves a subgraph with bounded row treewidth.

It follows from the definitions that for every graph $G$,
$$\ltw(G) \leq \rtw(G)+1.$$
To see this, suppose that $G\subseteq H\boxtimes P_\infty$ where $\tw(H)=\rtw(G)$. 
Let $\mathcal{T}=(B_x:x\in V(T))$ be a minimum-width tree-decomposition of $H$. 
Assume $V(P_\infty)=\N$. 
For each $b\in \N$, 
let $L_b := \{ (a,b)\in V(G) : a\in V(H)\}$. 
So $\mathcal{L}=(L_1,L_2,\dots)$ is a layering of $G$.
For each $x\in V(T)$, 
let $B'_x := \{ (a,b)\in V(G) : a\in V(H)\cap B_x,\, b\in\N \}$. 
So $\mathcal{T'}:=(B'_x:x\in V(T))$ is a tree-decomposition of $G$. 
Note that $|B'_x\cap L_b| \leq |B_x|\leq\tw(H)+1$. 
Thus $(\mathcal{L},\mathcal{T})$ is a layered tree-decomposition of $G$ with width at most $\tw(H)+1=\rtw(G)+1$.

Define the \defn{row pathwidth} $\rpw(G)$ of a graph $G$ to be the minimum pathwidth of a graph $H$ such that $G$ is isomorphic to a subgraph of $H\boxtimes \PP$. Then $\lpw(G) \leq \rpw(G)+1$ for every graph $G$.
As explained above, ``bounded layered treewidth'' and ``bounded row treewidth'' coincide for minor-closed classes. However, this is not the case for their pathwidth analogues. \citet{DEJMW20} proved that a minor-closed class $\GG$ has bounded layered pathwidth if and only if some apex-forest is not in $\GG$. However, forests are a minor-closed class excluding some apex-forest (namely, $K_3$), but forests have unbounded row pathwidth by \cref{sep-lpw-epw}. In fact, \citet{RS-I} proved that for every fixed forest $T$, the class of $T$-minor-free graphs has bounded pathwidth (see \citep{BRST91} for a tight bound). It follows that a minor-closed class $\GG$ has bounded row pathwidth if and only if $\GG$ has bounded pathwidth if and only if some tree is not in $\GG$.

\section{Separating Layered Treewidth and Row Treewidth}
\label{sep-tw}


This section proves \cref{sep-ltw-etw}. Subdivisions play a key role. A \defn{subdivision} of a graph $G$ is a graph $G'$ obtained by replacing each edge $vw$ of $G$ with a path $P_{vw}$ from $v$ to $w$ whose internal vertices have degree $2$.  If each $P_{vw}$ has exactly $s$ internal vertices, then $G'$ is the \defn{$s$-subdivision} of $G$.  If each $P_{vw}$ has at most $s$ internal vertices, then $G'$ is a \defn{$(\le\! s)$-subdivision} of $G$.
It is well known and easily proved that $\tw(G')=\tw(G)$ for every subdivision $G'$ of $G$.

The proof of \cref{sep-ltw-etw} is based on two lemmas. The first shows that subdivisions efficiently reduce layered treewidth (\cref{LayeredTreewidthSubdivision}). The second shows that subdivisions do not efficiently reduce row treewidth (\cref{LargeRowTreewidthSubdiv}). The theorem quickly follows.

\subsection{Subdivisions Efficiently Reduce Layered Treewidth}

\begin{lem}
\label{LayeredTreewidthSubdivision}
For every graph $G$ with layered treewidth $k\in\N$,
\begin{enumerate}[label=(\alph*),font=\upshape]
\item there exists a $(\le\!2k-2)$-subdivision $G'$ of $G$ of layered
treewidth 1; and
\item if any subdivision $G'$ of $G$ has layered treewidth at most $c$, then some edge of $G$ is subdivided at least $k/c-1$ times in $G'$. 
\end{enumerate}
\end{lem}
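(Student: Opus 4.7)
\emph{Part (a).} The plan is to start with a width-$k$ layered tree-decomposition $(\mathcal{L},\mathcal{T})$ of $G$ and use it to build both $G'$ and its width-$1$ decomposition simultaneously. For each layer $L_i$, define an auxiliary graph $H_i$ on $L_i$ whose edges join pairs that co-occur in some bag of $\mathcal{T}$. Restricting $\mathcal{T}$ to $L_i$ (taking bags $B_x\cap L_i$) gives a tree-decomposition of $H_i$ of width at most $k-1$, so $\chi(H_i)\le k$; fix a proper $k$-colouring $f_i:L_i\to\{0,\dots,k-1\}$. Refine the layering via $\sigma(v):=k\,i+f_i(v)$ for $v\in L_i$; properness guarantees $\sigma(u)\ne\sigma(v)$ whenever $u,v$ share a bag. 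Now replace each edge $uv\in E(G)$ with a path $P_{uv}$ having exactly $|\sigma(u)-\sigma(v)|-1$ internal vertices, placed one per sub-layer strictly between $\sigma(u)$ and $\sigma(v)$. Since $|i-j|\le 1$ for any edge's endpoints, $|\sigma(u)-\sigma(v)|\le 2k-1$, giving at most $2k-2$ subdivisions per edge, and the subdivided paths realise a layering of $G'$ in which consecutive vertices differ by $1$ in sub-layer index.

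\emph{Tree-decomposition for (a).} Keep the underlying tree $T$ of $\mathcal{T}$ with the original bags unchanged, and for each $uv\in E(G)$ attach a fresh leaf $n_{uv}$ to some node $x\in V(T)$ with $\{u,v\}\subseteq B_x$, giving $n_{uv}$ the bag $\{u,v\}\cup(V(P_{uv})\setminus\{u,v\})$. The tree-decomposition property then follows quickly: subtrees of original vertices stay connected because every leaf hangs off a bag already containing both its endpoints; internal subdivision vertices live in a unique leaf bag; every edge of $G'$ is covered by that leaf's bag. Each original bag $B_x$ meets any sub-layer $ki+c$ in at most one vertex because $B_x\cap L_i$ is a clique of $H_i$ and $f_i$ is proper, and each leaf bag contains exactly one vertex per sub-layer by construction, so the layered width is $1$.

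\emph{Part (b) and the main obstacle.} For (b), let $G'$ be any subdivision of $G$ with $\ltw(G')\le c$, and let $s$ be the maximum number of subdivisions on any single edge. Each edge of $G$ appears in $G'$ as a path of length at most $s+1$, so in any width-$c$ layered tree-decomposition $(\mathcal{L}',\mathcal{T}')$ of $G'$ the endpoints of the edge lie within $s+1$ layers of each other. Coarsen $\mathcal{L}'$ into blocks of $s+1$ consecutive layers, intersect each block with $V(G)$, and reuse the tree of $\mathcal{T}'$ with bags $B_x:=B'_x\cap V(G)$; this produces a layered tree-decomposition of $G$ of width at most $c(s+1)$, so $k\le c(s+1)$ and $s\ge k/c-1$. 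The main obstacle is in (a): naively inserting the vertices of $P_{uv}$ into every bag that already contains $uv$ allows two distinct edges of $B_x$ with overlapping sub-layer ranges to both contribute a vertex to some sub-layer, violating width $1$. Isolating each $P_{uv}$ inside its own fresh leaf $n_{uv}$ removes this obstruction and leaves the colouring of $H_i$ to handle the original bags, which is the reason the proof goes through at the promised bound $2k-2$.
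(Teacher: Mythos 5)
Your part~(a) is correct and is essentially the paper's own argument: your auxiliary graph $H_i$ is exactly what the paper gets by adding edges so that each $B_x\cap V_i$ is a clique (a chordal graph with no $(k+1)$-clique, hence $k$-colourable), and your refined layering $\sigma$, the choice of $|\sigma(u)-\sigma(v)|-1$ division vertices per edge, and the tree-decomposition obtained by keeping the original bags and hanging one fresh leaf per edge off a bag containing both endpoints all coincide with the paper's construction, including the width-$1$ verification.

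Part~(b), however, has a genuine gap: the bags $B_x:=B'_x\cap V(G)$ need not form a tree-decomposition of $G$. A tree-decomposition of the subdivision $G'$ is only required to cover the edges of $G'$, so for an edge $vw$ of $G$ that is subdivided at least once there need not be any bag of $\mathcal{T}'$ containing both $v$ and $w$, and after intersecting with $V(G)$ the edge $vw$ may be covered by no bag at all. Concretely, if $G$ is a triangle $u,v,w$ and $G'$ is its $1$-subdivision, the $6$-cycle $u,a,v,b,w,c$, then the path-decomposition with bags $\{u,a,c\},\{a,v,c\},\{v,b,c\},\{b,w,c\}$ contains no two of $u,v,w$ in a common bag, so your restricted bags cover none of the three edges of $G$. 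Your coarsened layering and the width estimate are fine, but without a valid tree-decomposition of $G$ the inequality $k\le c(s+1)$ does not follow. The paper repairs precisely this point: orient each edge of $G$ arbitrarily and, in every bag of $\mathcal{T}'$, replace each division vertex $z$ of an oriented edge $vw$ by its tail $v$. Then for a subdivided edge $vw$, the bag containing $w$ and the division vertex adjacent to $w$ now contains both $v$ and $w$, and for each $v\in V(G)$ the nodes whose new bag contains $v$ form a connected subtree (the union of the subtrees of $v$ and of the division vertices of edges oriented away from $v$), so one genuinely obtains a tree-decomposition of $G$. Note also that after this replacement the width count is not literally your $c(s+1)$ computation, since a division vertex lying in one block of layers may be replaced by an endpoint belonging to a different block; the paper redoes this count for the replaced bags together with the coarsened layering and concludes $k\le c(s+1)$, whence $s\ge k/c-1$. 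That replacement step (and the accompanying re-count) is the missing ingredient in your argument.
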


\begin{proof}
Let $(B_x:x\in V(T))$ be a tree-decomposition of $G$ and let
$(V_0,V_1,\dots)$ be a layering of $G$, such that $|B_x\cap V_i|\leq
k$ for each $x\in V(T)$ and $i\in\N_0$.

First we prove (a). 
We may assume that $B_x\cap V_i$ is a clique for each $x\in V(T)$ and $i\in\N_0$. 
So $G[V_i]$ is a chordal graph with no $(k+1)$-clique, which is therefore $k$-colourable. 
Let $c:V(G)\to\{0,1,\dots,k-1\}$ be a function such that $c$ is a proper $k$-colouring of $G[V_i]$ for each $i\in\N_0$. Thus for each $x\in V(T)$ and $i\in\N_0$, and for all distinct vertices $v,w\in B_x\cap V_i$, we have $c(v)\neq c(w)$.
Let $L_{ki+j} := \{v \in V_i : c(v)=j\}$ for $i\in\N_0$ and $j\in\{0,1,\dots,k-1\}$. Let
$G'$ be obtained from $G$ as follows. Consider each edge $e = vw$ of
$G$. Say $v\in V_i$ and $w\in V_{i'}$, and $v\in L_a$ and $w\in
L_{a'}$. Without loss of generality, $a<a'$. Then $a' \leq k(i'+1) - 1
\leq k(i+2)-1 \leq a + 2k-1$.
Replace $e$ by the path $(v, s_{e,a+1}, s_{e,a+2}, \dots, s_{e,a'-1},
w)$ in $G'$.
The number of division vertices is $a'-1-a \leq 2k-2$. 
Put the division vertex $s_{e,b}$ in $L_b$ for each $b\in\{a+1,\dots,a'-1\}$. 
So $(L_1,L_2,\dots)$ is a layering of $G'$.
Some bag $B_x$ contains $v$ and $w$.
Add a leaf node to $T$ adjacent to $x$ with corresponding bag
$\{v, s_{e,a+1}, s_{e,a+2}, \dots, s_{e,a'-1}, w\}$.
We obtain a tree-decomposition of $G'$ with at most one vertex in each
layer and in each bag. Hence $\ltw(G')=1$.


We now prove (b). Suppose that some $(\leq\!r)$-subdivision $G'$ of $G$
has $\ltw(G') \leq c$. Let $(B_x:x\in V(T))$ be a tree-decomposition
of $G'$ and let $(V_0,V_1,\dots)$ be a layering of $G'$ such that
$|B_x\cap V_i| \leq c$ for each $x\in V(T)$ and $i\in\N_0$. Orient each
edge of $G$ arbitrarily. For each oriented edge $vw$ of $G$ and for
each division vertex $z$ of $vw$, let $\alpha(z):= v$. For each node
$x\in V(T)$, let $C_x$ be obtained from $B_x$ by replacing each
division vertex $z\in B_x$ by $\alpha(z)$. Observe that $(C_x:x\in
V(T))$ is a tree-decomposition of $G$. For $j\in\N_0$, let $L_j:= V(G)
\cap ( V_{j(r+1)}\cup V_{j(r+1)+1} \cup \dots \cup V_{(j+1)(r+1)-1}
)$. Consider an edge $vw$ of $G$ with $v\in V_i$ and $w\in V_{i'}$ and
$i\leq i'$. Then $i' \leq i+ r+1$ since $vw$ is subdivided at most $r$
times. Say $v\in L_j$ and $w\in L_{j'}$. By definition, $j(r+1) \leq i
\leq (j+1)(r+1)-1$ and $j'(r+1) \leq i' \leq (j'+1)(r+1)-1$. Hence
$j'(r+1) \leq i' \leq i+r+1 \leq (j+1)(r+1)-1 + (r+1) = (j+1)(r+1) +
r$, implying $j'\leq j+1$. That is, $(L_0,L_1,\dots)$ is a layering of
$G$. Each layer $L_j$ contains at most $c(r+1)$ vertices in each bag
$C_x$. Thus $k=\ltw(G) \leq c(r+1)$, implying $r \geq k/c -1$.
\end{proof}


Note that the proof of \cref{LayeredTreewidthSubdivision}(a) is easily
adapted to show that if $s_e\geq 2k-2$ for each edge $e\in E(G)$ and
$G'$ is the subdivision of $G$ in which each edge $e$ is subdivided
$s_e$ times, then $\ltw(G')\leq 2$. We omit these straightforward
details.

\citet{bannister.devanny.ea:track} proved that $\lpw(G) \leq \ceil{( \pw(G)+1)/2}$. An analogous proof shows that
$\ltw(G) \leq \ceil{( \tw(G)+1)/2}$.
\cref{LayeredTreewidthSubdivision}(a) then implies:

\begin{cor}\label{treewidth}\label{TreewidthSubdivision}
Every graph with treewidth $k$ has a $(\leq k)$-subdivision with layered treewidth 1.
\end{cor}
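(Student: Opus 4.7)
The plan is to combine the two facts stated immediately before the corollary: namely, the inequality $\ltw(G) \leq \ceil{(\tw(G)+1)/2}$ (the treewidth analogue of the Bannister--Devanny--Eppstein bound, obtained by an argument parallel to the pathwidth case) together with \cref{LayeredTreewidthSubdivision}(a). The corollary is then simply a matter of plugging the first bound into the second and checking the arithmetic.

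Concretely, let $G$ be a graph with $\tw(G)=k$ and set $m := \ceil{(k+1)/2}$. The cited inequality gives $\ltw(G)\le m$. Applying \cref{LayeredTreewidthSubdivision}(a) to $G$ with layered treewidth parameter $m$ produces a $(\le 2m-2)$-subdivision $G'$ of $G$ with $\ltw(G')=1$. It remains to verify that $2m-2\le k$ in both parities: if $k$ is odd then $2m-2 = 2\cdot\tfrac{k+1}{2}-2 = k-1$, and if $k$ is even then $2m-2 = 2\cdot\tfrac{k+2}{2}-2 = k$. So in either case $G'$ is a $(\le k)$-subdivision with $\ltw(G')=1$, which is exactly the statement of the corollary.

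There is really no main obstacle here: both inputs are already available, the proof is a one-line deduction, and the only ``work'' is the parity check above. The only conceptually substantive point lurking in the background is the treewidth analogue of Bannister--Devanny--Eppstein, but the paper explicitly asserts that its proof is analogous to the pathwidth case, so I would simply cite it in the present writeup rather than reproduce it.
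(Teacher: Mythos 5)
Your proposal is correct and is exactly the paper's argument: the corollary is stated as an immediate consequence of the bound $\ltw(G)\le\ceil{(\tw(G)+1)/2}$ combined with \cref{LayeredTreewidthSubdivision}(a), and your parity check $2\ceil{(k+1)/2}-2\le k$ is precisely the arithmetic the paper leaves implicit.
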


We remark that \cref{treewidth} is tight, up to a small constant factor:
\begin{obs}
    Let $G'$ be any $(\le\!s)$-subdivision of the complete graph $K_{k+1}$.  Then $\ltw(G')\ge (k+1)/(2s+3)$. In particular, $\ltw(G')>1$ if $s < (k-2)/2$.
\end{obs}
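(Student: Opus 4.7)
The plan is to deduce the bound by applying \cref{LayeredTreewidthSubdivision}(b) with $G:=K_{k+1}$. For this I first need a lower bound on $\ltw(K_{k+1})$.

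I claim that $\ltw(K_{k+1})\ge\lceil(k+1)/2\rceil$. In any layered tree-decomposition $(\mathcal{L},\mathcal{T})$ of $K_{k+1}$, every pair of vertices must share a bag, so by the Helly property for subtrees of a tree, some bag $B$ contains all $k+1$ vertices. Moreover, in the layering $\mathcal{L}=(L_0,L_1,\dots)$, the vertices of $K_{k+1}$ occupy at most two consecutive layers, since any two adjacent vertices have layer indices differing by at most $1$ and every pair is adjacent. Hence some layer $L_i$ satisfies $|B\cap L_i|\ge\lceil(k+1)/2\rceil$, proving the claim.

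Now set $c:=\ltw(G')$ for an arbitrary $(\le\!s)$-subdivision $G'$ of $K_{k+1}$. Applying \cref{LayeredTreewidthSubdivision}(b) to $G=K_{k+1}$ (with its $\ltw$-bound just established) yields that some edge of $K_{k+1}$ is subdivided at least $\lceil(k+1)/2\rceil/c-1$ times in $G'$. Since $G'$ is a $(\le\!s)$-subdivision, this gives $s\ge \lceil(k+1)/2\rceil/c-1$, equivalently
\[
c \;\ge\; \frac{\lceil(k+1)/2\rceil}{s+1}\;\ge\;\frac{k+1}{2s+2}\;>\;\frac{k+1}{2s+3},
\]
as required. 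The second assertion follows immediately: $(k+1)/(2s+3)>1$ is equivalent to $s<(k-2)/2$, so under that hypothesis $\ltw(G')>1$.

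The only real content is the lower bound $\ltw(K_{k+1})\ge\lceil(k+1)/2\rceil$, which I expect to be the main (modest) obstacle; after that, the result is a clean application of the previously established \cref{LayeredTreewidthSubdivision}(b), with care taken to track that the ``$k$'' in the statement of (b) denotes $\ltw(G)$ for the unsubdivided graph rather than anything else.
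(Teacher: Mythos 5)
Your proof is correct, but it takes a different route from the paper. The paper argues directly on $G'$: since $K_{k+1}$ has radius $1$, the subdivision $G'$ has radius at most $s+1$, so any layering of $G'$ has at most $2s+3$ nonempty layers; since treewidth is invariant under subdivision, $\tw(G')=k$, hence some bag has at least $k+1$ vertices, and the pigeonhole principle over the layers gives $\ltw(G')\ge (k+1)/(2s+3)$. You instead first establish $\ltw(K_{k+1})\ge\lceil(k+1)/2\rceil$ (every pair of vertices shares a bag, so by the Helly property some bag contains all $k+1$ vertices, and a complete graph occupies at most two consecutive layers of any layering), and then transfer this to $G'$ via \cref{LayeredTreewidthSubdivision}(b), i.e.\ via the inequality $\ltw(K_{k+1})\le \ltw(G')(s+1)$ from its proof. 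Both arguments are sound and there is no circularity, since \cref{LayeredTreewidthSubdivision} is proved independently of the observation. Your version buys a marginally stronger bound, $\ltw(G')\ge (k+1)/(2s+2)$, and illustrates how the earlier lemma can be reused; the paper's version is self-contained, avoids invoking the lemma, and its pigeonhole step uses only a large bag (coming from treewidth) rather than a clique contained in a bag, which is why it is phrased with the constant $2s+3$ coming from the radius of $G'$ rather than from the two layers spanned by $K_{k+1}$.
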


\begin{proof}
    Let $(\mathcal{L},\mathcal{T})$ be a layered tree-decomposition of $G'$.    Since $K_{k+1}$ has radius 1, $G'$ has radius at most $s+1$ and therefore $|\mathcal{L}|\le 2s+3$.  Since $K_{k+1}$ has treewidth $k$, so does $G'$. Thus $\mathcal{T}$ has at least one bag $B_x$ of size at least $k+1$.  By the pigeonhole principle, $|B_x\cap L|\ge (k+1)/\mathcal{|L|}\ge (k+1)/(2s+3)$ for at least one $L\in\mathcal{L}$.
\end{proof}

\subsection{Subdivisions do not Efficiently Reduce Row Treewidth}

This section proves the following result.

\begin{lem}\label{LargeRowTreewidthSubdiv}
For each $k\in\N$ and $s\in\N_0$ there exists a graph $G$ such that for every $(\le\!s)$-subdivision $G'$ of $G$, $$\rtw(G') \geq k \geq \tw(G).$$
\end{lem}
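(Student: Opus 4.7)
Since $\rtw(G')\leq \tw(G')=\tw(G)\leq k$ holds automatically for every subdivision $G'$ of $G$ (using $H=G$ and a trivial path, and the standard fact that subdivision preserves treewidth), the inequality $\rtw(G')\geq k$ in fact pins down $\rtw(G')=\tw(G)=k$. So the task reduces to exhibiting a graph $G$ with $\tw(G)=k$ whose row treewidth cannot be reduced by any $(\leq\! s)$-subdivision.

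A natural candidate is a sufficiently large $k$-tree. Concretely, I would build $G$ from $N=N(k,s)$ copies of $K_{k+1}$ glued along shared $k$-cliques in a tree-like fashion, giving $\tw(G)=k$ by construction. To prove $\rtw(G')\geq k$ for an arbitrary $(\leq\! s)$-subdivision $G'$, I would argue by contradiction: suppose $G'\subseteq H\boxtimes \PP$ with $\tw(H)\leq k-1$. The key observation is that for every edge $uv\in E(G)$, the path replacing $uv$ in $G'$ has length at most $s+1$; consequently, if $u$ maps to $(h_u,p_u)$ and $v$ to $(h_v,p_v)$, then $|p_u-p_v|\leq s+1$ and $h_u,h_v$ are joined by a walk of length at most $s+1$ in $H$. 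Applied to a single copy of $K_{k+1}$ in $G$, this forces its $k+1$ original vertices to project into a $\PP$-window of length at most $s+1$ with $H$-images forming a clique in $H^{s+1}$ (the $(s+1)$st graph power). A pigeonhole argument across the $N$ many copies of $K_{k+1}$ should then concentrate many such structures into a common $\PP$-window around a common family of $H$-coordinates, from which one extracts an honest $K_{k+1}$-subgraph of $H$, contradicting $\tw(H)\leq k-1$.

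The main obstacle is exactly this last step: a clique in $H^{s+1}$ is not automatically a clique in $H$, and bounded-treewidth graphs can have large powers with large clique number. Bridging this gap will likely require either a more carefully designed $G$ whose $K_{k+1}$-copies overlap so that the connecting walks in $H$ are forced to share many vertices (producing a genuine clique by a cleaner counting), or an iterated pigeonhole/Ramsey argument with $N$ chosen as a tower-type function of $k$ and $s$ so that a true $K_{k+1}$-subgraph of $H$ survives. Getting this counting tight enough to rule out the ``stretched'' alternative is the crux of the proof.
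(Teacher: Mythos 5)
There is a genuine gap here, and it is exactly the one you flag yourself: your argument stops at the point where each copy of $K_{k+1}$ yields a clique in the power graph $H^{s+1}$ inside a short window of $\PP$, and no mechanism is given for converting this into a contradiction with $\tw(H)\le k-1$. Bounded-treewidth graphs do have powers with huge cliques, and no amount of pigeonholing over disjoint copies of $K_{k+1}$ by itself forces the connecting walks to collapse; moreover, aiming for a $K_{k+1}$ \emph{subgraph} of $H$ is the wrong target (it is stronger than needed and not what such arguments can deliver) — a $K_{k+1}$ \emph{minor} in $H$ already contradicts $\tw(H)\le k-1$. The candidate construction is also too weak as stated: ``copies of $K_{k+1}$ glued along shared $k$-cliques in a tree-like fashion'' includes, for instance, the $k$-th power of a path, which is a $k$-tree but is a subgraph of $K_k\boxtimes\PP$ and hence has row treewidth at most $k-1$; subdividing only makes embedding easier. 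So both the construction and the extraction step need real additional ideas.

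The paper's proof supplies both. The graph $G=G_{s,k,w}$ is built recursively: take $N=k^2(2s+1)(4s+5)w+1$ disjoint copies of $G_{s,k-1,w}$, add one apex vertex $v$, and join $v$ to every vertex of every copy by $N$ internally disjoint paths of length $2$. Two features of this construction do the work your plan is missing. First, $G$ has radius $2$, so every $(\le\!s)$-subdivision $G'$ has radius at most $2s+2$; hence any layering used by a layered $H$-partition of width $w$ has at most $4s+5$ layers, which bounds every part $A_x$ by $(4s+5)w$ — this replaces your uncontrolled ``window'' analysis by an absolute bound on part sizes. Second, instead of hunting for cliques in $H$, the induction builds, inside $G'$ itself, a $K_{k+1}$ model of bounded size that \emph{respects} the $H$-partition (each part meets at most one branch set); by induction one finds a respectful $K_k$ model in a copy avoiding the part containing $v$, and the $N$ parallel length-$2$ paths let one route from $v$ to each branch set while avoiding the (boundedly many) parts already used. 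Contracting parts then gives a $K_{k+1}$ minor in $H$, so $\tw(H)\ge k$, and via the layered-$H$-partition characterisation of row treewidth (\cref{ProductPartition}) this yields $\rtw(G')\ge k$; the bound $\tw(G)\le k$ is checked directly for the construction. Your opening observation that $\rtw(G')\le\tw(G')=\tw(G)$ is correct but is not the issue; the crux you identified remains unresolved in your proposal.
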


A weaker version (with a more indirect proof) of \cref{LargeRowTreewidthSubdiv} is implied by existing results about the \defn{$p$-centred chromatic number $\chi_p(G)$} of a graph $G$; see \citep{DFMS21,joret.pilipczuk.ea:two} for the definition (we will not need it). \mbox{\citet{joret.pilipczuk.ea:two}} show that, for each $p\in\N$, there exists an integer $k\in \Theta(\sqrt{p})$ and a treewidth-$k$ graph $G$, such that if $G'$ is the $6k$-subdivision of $G$, then $$\chi_p(G') \geq 2^{\Omega(\tw(G))}.$$
On the other hand, \citet{DFMS21} (see also \citep{dujmovic.morin.ea:structure}) show that for every graph $G$ and $p\in\N$, $$\chi_p(G) \leq (p+1)p^{\binom{p+\rtw(G)}{\rtw(G)}}.$$
Thus, for the graph $G'$ constructed by \citet{joret.pilipczuk.ea:two},
$$ 2^{\Omega(\tw(G))} \leq \chi_p(G') \leq (p+1)p^{\binom{p+\rtw(G')}{\rtw(G')}} .$$
This implies that $\rtw(G') \in \Omega(\tw(G)/\log(\tw(G)))$, as noted in \citep{dvorak.huynh.ea:notes}.

To prove \cref{LargeRowTreewidthSubdiv} it will be convenient to use the language of $H$-partitions from \citep{DJMMUW20}. An \defn{$H$-partition} of a graph $G$ is a partition $\mathcal{H}=(B_x:x\in V(H))$ of $V(G)$ indexed by the nodes of some graph $H$ such that for any edge $vw\in E(G)$, if $v\in B_x$ and $w\in B_y$, then $xy\in E(H)$ or $x=y$.
A \defn{layered $H$-partition} $(\mathcal{H},\mathcal{L})$ of a graph $G$ consists of an $H$-partition $\mathcal{H}$ and a layering $\mathcal{L}$ of $G$.  The (layered) \defn{width} of $(\mathcal{H},\mathcal{L})$ is $\max\{|B_x\cap L|:x\in V(H), L\in\mathcal{L}\}$. The \defn{layered width} of an $H$-partition $\mathcal{H}$ of a graph $G$ is the minimum width, taken over all layerings $\mathcal{L}$ of $G$, of the width of $(\mathcal{H},\mathcal{L})$. \citet{DJMMUW20} observed that:

\begin{obs}[\citep{DJMMUW20}]
\label{ProductPartition}
For all graphs $G$ and $H$, $G$ is isomorphic to a subgraph of $H\boxtimes \PP \boxtimes K_w$ if and only if $G$ has an $H$-partition of layered width at most $w$. In particular, $\rtw(G)$ equals the minimum treewidth of a graph $H$ for which $G$ has a layered $H$-partition of layered width $1$.
\end{obs}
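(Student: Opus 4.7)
My plan is to prove the biconditional directly from the definitions of strong product, $H$-partition, and layering, by matching coordinates. The three coordinates of $V(H\boxtimes\PP\boxtimes K_w)=V(H)\times V(\PP)\times V(K_w)$ will correspond exactly to: the bag index of the $H$-partition, the layer index of the layering, and an injective label within each bag-layer intersection.

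For the ``only if'' direction, suppose $G$ is isomorphic to a subgraph of $H\boxtimes\PP\boxtimes K_w$, and identify each $v\in V(G)$ with a triple $(\alpha(v),\beta(v),\gamma(v))$. I would set $B_x:=\{v:\alpha(v)=x\}$ for each $x\in V(H)$ and $L_b:=\{v:\beta(v)=b\}$ for each $b\in\N$. The definition of the strong product forces $\alpha(v)=\alpha(w)$ or $\alpha(v)\alpha(w)\in E(H)$ for every edge $vw$, so $(B_x:x\in V(H))$ is an $H$-partition; likewise $|\beta(v)-\beta(w)|\le 1$ for every edge, so $(L_b)$ is a layering. Finally, $|B_x\cap L_b|\le w$ because the map $v\mapsto\gamma(v)$ is injective on $B_x\cap L_b$ (two vertices sharing all three coordinates would coincide), giving layered width at most $w$.

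For the ``if'' direction, given a layered $H$-partition $(\mathcal{H},\mathcal{L})$ of width at most $w$, I would for each pair $(x,b)$ with $B_x\cap L_b\neq\emptyset$ pick an injection $\gamma:B_x\cap L_b\to V(K_w)$, which exists by the width bound. Then the map $v\mapsto(x,b,\gamma(v))$, where $v\in B_x\cap L_b$, is an injection $\phi:V(G)\to V(H\boxtimes\PP\boxtimes K_w)$. For any edge $vw\in E(G)$, the $H$-partition property gives $x=x'$ or $xx'\in E(H)$ where $v\in B_x$ and $w\in B_{x'}$, the layering property gives $|b-b'|\le 1$ where $v\in L_b$ and $w\in L_{b'}$, and the third coordinate is automatically either equal or adjacent in $K_w$. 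Since $\phi(v)\neq\phi(w)$, these conditions show $\phi(v)\phi(w)\in E(H\boxtimes\PP\boxtimes K_w)$, so $\phi$ embeds $G$ as a subgraph.

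The ``in particular'' clause follows by noting that $H\boxtimes\PP\boxtimes K_1 = H\boxtimes\PP$, so layered width $1$ corresponds to $w=1$; taking the minimum over $\tw(H)$ on both sides yields $\rtw(G)$. I do not foresee a substantive obstacle: the entire argument is a bookkeeping translation between the coordinate description of the strong product and the combinatorial description of layered $H$-partitions. The only point deserving care is verifying, in the ``if'' direction, that $\phi$ is genuinely injective and that edges never map to loops, both of which follow immediately from the fact that distinct vertices in the same bag-layer receive distinct $\gamma$-labels.
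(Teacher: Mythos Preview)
Your proof is correct. The paper itself does not prove this statement: it is stated as an observation attributed to \citep{DJMMUW20} and used without proof. Your argument is exactly the standard one---the three coordinates of $H\boxtimes\PP\boxtimes K_w$ correspond bijectively to (bag index, layer index, label within bag-layer intersection), and the strong-product adjacency condition ``each coordinate equal or adjacent'' matches precisely the $H$-partition condition, the layering condition, and the fact that $K_w$ is complete. There is nothing to add; the care you flag about injectivity in the ``if'' direction is the only point worth mentioning and you handle it correctly.
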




This observation and the next lemma (with $w=1$) implies \cref{LargeRowTreewidthSubdiv}. \cref{LayeredPartitionSubdivision} generalises a result of \citet{DJMMUW20} who proved the $s=0$ case.

\begin{lem}\label{LayeredPartitionSubdivision}
For all $w,k\in\N$ and $s\in\N_0$, there exists a graph $G=G_{s,k,w}$ such that $\tw(G)\leq k$, and for any $(\le\!s)$-subdivision $G'$ of $G$, and for any graph $H$ and any $H$-partition of $G'$ with layered width at most $w$, there is a $K_{k+1}$ minor in $H$, implying $\tw(H)\geq k$ and $\rtw(G) \geq k$. 
\end{lem}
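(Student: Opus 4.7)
The plan is to prove the lemma by induction on $k$. The base case $k=0$ is immediate: take $G_{s,0,w}=K_1$, which has treewidth $0$, and any host $H$ trivially contains a $K_1$ minor. For the inductive step I would construct $G_{s,k,w}$ by taking $N$ disjoint copies $G_1,\dots,G_N$ of $G_{s,k-1,w'}$ and adding a universal apex vertex $u$ joined to every vertex of every copy, where $N$ and $w'$ are parameters chosen later. The bound $\tw(G_{s,k,w})\le k$ follows by taking a width-$(k-1)$ tree-decomposition of each copy (provided by induction), adding $u$ to every bag, and identifying the decompositions at a common root bag $\{u\}$.

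Now consider a $(\le\! s)$-subdivision $G'$ of $G_{s,k,w}$ equipped with a layered $H$-partition $(\mathcal{H},\mathcal{L})$ of width at most $w$. Let $x_0$ be a bag containing $u$. For each copy $G_i$ write $G'_i$ for the induced subdivision of $G_i$ inside $G'$, and let $H_i$ be the subgraph of $H$ induced by the bags that meet $V(G'_i)$; then the restriction of the partition to $G'_i$ is a layered $H_i$-partition of width at most $w$. By the inductive hypothesis (with $w'=w$) each $H_i$ contains a $K_k$-minor with branch sets $X_1^{(i)},\dots,X_k^{(i)}$. Because $u$ is adjacent in $G$ to every vertex of $G_i$, each edge $uv$ becomes in $G'$ a path of length at most $s+1$, and hence in $H$ there is a walk of length at most $s+1$ from $x_0$ to the bag $x_v$ of every original vertex $v\in V(G_i)$, with internal vertices among the bags containing subdivision vertices of $u$-incident edges.

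For $s=0$ the bag $x_0$ is adjacent to every bag of $H_i$, so appending $\{x_0\}$ as a new branch set to any such $K_k$-minor immediately yields a $K_{k+1}$-minor (after passing to a second copy if needed, to keep $x_0$ outside $H_i$). For $s>0$ I would extend each branch set $X_j^{(i)}$ by the internal bags of a walk from $X_j^{(i)}$ to $x_0$ along a subdivided $u$-edge, so that the extension reaches a neighbour of $x_0$; the $k$ sets together with $\{x_0\}$ would then form the desired $K_{k+1}$-minor in $H$. The main obstacle is keeping these extensions pairwise disjoint: a single bag may host subdivision vertices of many $u$-edges, and the branch sets produced inductively need not contain any original-vertex bag at all. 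I would address both difficulties by strengthening the inductive hypothesis so that each branch set is required to meet the bag of some original vertex of $G_i$, and then choose $N$ large enough that Ramsey theory (applied to the finite ``profile'' describing which bags near $x_0$ host $u$-incident subdivision vertices, whose multiplicity per layer is bounded by $w$) produces a single copy $G_i$ whose $k$ extensions admit pairwise-disjoint routings to $x_0$. This counting argument will be the hard part and will determine the required values of $w'$ and $N$.
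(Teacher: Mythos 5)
There is a genuine gap, and it sits exactly where you yourself flag it (``this counting argument will be the hard part''). Your overall strategy --- induction on $k$, many copies of $G_{s,k-1,w}$ under an apex, and a strengthened inductive hypothesis requiring each branch set to contain an original vertex --- matches the paper's. But your construction attaches the apex $u$ to each vertex of each copy by a \emph{single} edge, and for $s\ge 1$ this lacks the redundancy that makes the connection step provable. After subdivision, each original vertex $y_j$ has only one route to the apex, and the adversary (who chooses the subdivision and the $H$-partition before you choose the model) can place the division vertices of that unique $u$--$y_j$ path into parts that already meet the $K_k$ model inside the chosen copy. Ramsey/pigeonhole over the $N$ copies does not obviously repair this, because the collisions you need to avoid are \emph{within} the single copy you select, and the inductive hypothesis hands you only one model per copy; you never exhibit the counting that would rule out a blocked routing for every copy simultaneously. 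A second, related omission: to make any counting work you must bound the number of parts the inductively-found branch sets can touch, i.e.\ the strengthened hypothesis must also bound the \emph{size} of each branch set (the paper requires $|V(Y_i)|\le k(2s+1)+1$, and works with models in $G'$ that ``respect'' the partition rather than with minors in $H$, precisely so such a size bound can be maintained). Without it, no finite choice of $N$ or $w'$ can dominate the set of blocked parts.

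The paper resolves both issues by changing the construction: the apex $v$ is joined to each vertex $u$ of each copy by $N$ internally disjoint paths of length $2$, with $N:=k^2(2s+1)(4s+5)w+1$. Then, having picked a copy whose subdivision avoids the apex's part, the set $F$ of all vertices lying in parts that meet the $K_k$ model has size at most $k^2(2s+1)(4s+5)w<N$ (this is where the size bound enters), so for each $y_j$ at least one of its $N$ candidate apex-paths avoids $F$ entirely; all these chosen paths are merged with $v$ into a single $(k{+}1)$-st branch set, so they need not be pairwise disjoint and no Ramsey-type argument is needed. If you want to salvage your write-up, replace the single apex edges by this bundle of parallel length-$2$ paths, add the size bound to your strengthened hypothesis, and carry the model inside $G'$ (respecting the partition) rather than extending branch sets of $H$; the routing difficulty you identified then disappears.
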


Say a $K_k$ model $\{Y_1,\dots,Y_k\}$ in a graph $G$ \defn{respects} an $H$-partition $(A_x:x\in V(H))$ of $G$ if for each $x\in V(H)$ there is at most one value of $i\in\{1,\dots,k\}$ for which $V(Y_i)\cap A_x\neq \emptyset$. For each $i\in\{1,\dots,k\}$, let $X_i$ be the subgraph of $H$ induced by those vertices $x\in V(H)$ such that $V(Y_i)\cap A_x\neq\emptyset$. Then $\{X_1,\dots,X_k\}$ is a $K_k$ model in $H$. Thus the next lemma implies \cref{LayeredPartitionSubdivision}.


\begin{lem}\label{IndHyp}
For all $w\in\N$ and $k,s\in\N_0$, there exists a graph $G=G_{s,k,w}$ such that $\tw(G)\leq k$, and for any $(\le\!s)$-subdivision $G'$ of $G$, and for any graph $H$ and any $H$-partition $(A_x:x\in V(H))$ of $G'$ with layered width at most $w$, there is a model $\{Y_1,\dots,Y_{k+1}\}$ of $K_{k+1}$ in $G'$ that respects $(A_x:x\in V(H))$, and for each $i\in\{1,\dots,k+1\}$ we have $|V(Y_i)|\leq k(2s+1)+1$ and $V(Y_i) \cap V(G)\neq\emptyset$.
\end{lem}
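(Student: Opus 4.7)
The proof is by induction on $k$, with $s$ and $w$ fixed. For $k=0$ take $G_{s,0,w}:=K_1$: its unique vertex trivially forms a $K_1$-model satisfying all the conditions. For $k\ge 1$, construct $G_{s,k,w}$ by taking $N=N(s,k,w)$ disjoint copies $C_1,\dots,C_N$ of the graph $G_{s,k-1,w}$ given by induction, together with a new apex vertex $u$ adjacent to every vertex of every $C_i$; the value $N$ will be pinned down by the pigeonhole count below. Since $\tw(G_{s,k-1,w})\le k-1$ by induction, adjoining a universal vertex yields $\tw(G_{s,k,w})\le k$.

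Fix a $(\le s)$-subdivision $G'$ of $G_{s,k,w}$, an $H$-partition $\mathcal{H}=(A_x:x\in V(H))$ of $G'$ of layered width at most $w$, and a witnessing layering $\mathcal{L}=(L_0,L_1,\dots)$. For each $i$, let $C'_i$ be the subdivision of $C_i$ sitting inside $G'$; since the restriction of $\mathcal{H}$ to $V(C'_i)$ still has layered width at most $w$, the inductive hypothesis applied to $C'_i$ furnishes a $K_k$-model $\{Y_1^{(i)},\dots,Y_k^{(i)}\}$ in $C'_i$ respecting that restriction, with $|V(Y_j^{(i)})|\le (k-1)(2s+1)+1$ and $V(Y_j^{(i)})\cap V(G)\neq\emptyset$. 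For a ``good'' copy $i^\ast$ (to be chosen), pick any anchor $v_j\in V(Y_j^{(i^\ast)})\cap V(G)$ and let $P_{uv_j}$ be the path in $G'$ replacing the edge $uv_j$ of $G_{s,k,w}$ (with at most $s$ internal vertices). Set $Y_{k+1}:=\{u\}$ and $\tilde{Y}_j:=Y_j^{(i^\ast)}\cup P_{uv_j}^{\mathrm{int}}$ for $j\le k$, where $P_{uv_j}^{\mathrm{int}}$ denotes the internal vertices of $P_{uv_j}$. Each $\tilde{Y}_j$ is connected, contains $v_j\in V(G)$, has an edge to $u$ in $G'$, and has size at most $(k-1)(2s+1)+1+s\le k(2s+1)+1$, so $\{\tilde{Y}_1,\dots,\tilde{Y}_k,Y_{k+1}\}$ satisfies the size and anchor requirements of a $K_{k+1}$-model in $G'$.

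The main obstacle is to choose $i^\ast$ (and the anchors) so that $\{\tilde{Y}_1,\dots,\tilde{Y}_k,Y_{k+1}\}$ actually respects $\mathcal{H}$. Because the $Y_j^{(i^\ast)}$'s already respect the restriction of $\mathcal{H}$ to $C'_{i^\ast}$, the only possible conflicts are: (i) the bag $A_{x_0}$ containing $u$ meets some $\tilde{Y}_j$; or (ii) a subdivision vertex in some $P_{uv_j}^{\mathrm{int}}$ lies in a bag that also meets $\tilde{Y}_{j'}$ with $j'\neq j$. The key geometric input is a layer localisation: since $u$ is adjacent in $G_{s,k,w}$ to every anchor $v\in V(G)\cap V(C_i)$ and every such edge is subdivided at most $s$ times, the vertex $u$, every anchor, and every internal vertex of every path $P_{uv}$ lies in at most $2s+3$ consecutive layers centred on $u$'s layer; moreover, each $Y_j^{(i)}$, being a connected subgraph of bounded size anchored in this band, extends only into a slightly wider band whose width depends only on $k$ and $s$. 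The layered-width hypothesis then bounds, by a function of $s$, $k$, and $w$, both how many such vertices any one bag can contain and, after a careful accounting, the number of ``bad'' copies $i$ for which no anchor choice avoids the conflicts above. Taking $N$ larger than this bound, the pigeonhole principle yields a good copy $i^\ast$, completing the induction.
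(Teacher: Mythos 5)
Your construction differs from the paper's in a crucial way, and the difference is exactly where your argument breaks. The paper does not make the new apex $v$ adjacent to the copies; instead, for \emph{every} vertex $u$ of every copy it adds $N$ internally disjoint paths of length $2$ from $v$ to $u$. This redundancy is what makes the final step a genuine pigeonhole: once a copy avoiding the bag of $v$ is fixed and the inductive $K_k$ model $\{Y_1,\dots,Y_k\}$ inside it is taken (whatever it is, anchors and all), the union $F$ of all parts meeting $Y_1\cup\dots\cup Y_k$ has size at most $k^2(2s+1)(4s+5)w<N$, so for each anchor $y_j$ one of the $N$ parallel connection paths avoids $F$ entirely, and no new conflicts can arise. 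In your construction each anchor has a single forced connection path $P_{uv_j}$, so you must instead pigeonhole over copies, and your claim that only boundedly many copies are ``bad'' is not justified by the layered-width hypothesis. Layered width bounds the size of each part (via the bounded number of layers) but not the \emph{number} of parts; a type-(ii) conflict for copy $i$ only needs one part containing a subdivision vertex of some $P_{uv_j}$ together with a single vertex of another branch set of that same copy, and an adversary can devote a fresh two-element part to each copy, consistent with layered width $1$, for unboundedly many copies. Your ``careful accounting'' sentence is precisely the missing argument, and no band/localisation estimate of the kind you describe bounds the number of such per-copy sabotage parts.

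There is a secondary gap feeding into this: the inductive hypothesis (as you state it, identically to the lemma) guarantees only \emph{one} respecting $K_k$ model per copy, and $V(Y_j)\cap V(G)$ may be a single vertex, so ``any anchor choice'' may give no freedom at all. Hence you cannot dodge a sabotaged connection by re-anchoring, and you have no licence to switch to a different inner model. To rescue your apex construction you would need either to strengthen the induction (e.g.\ produce a respecting model avoiding any prescribed set of boundedly many parts, or produce many vertex-disjoint models), or to build in connection redundancy as the paper does. As written, the pigeonhole over copies does not go through, so the proof has a genuine gap at its central step, even though the construction, treewidth bound, base case, size bounds, and the reduction of ``respects'' to the two conflict types are all fine and parallel the paper's argument.
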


\begin{proof}
The proof is by induction on $k$. For the base case with $k=0$, let $G_{s,0,w}$ be the graph with one vertex $v$ and no edges. Any subdivision $G'$ contains a model $Y_1=G'[\{v\}]$ of $K_1$ with $|Y_1| \leq 1 = 0(2s+1)+1$, and this model trivially respects any $H$-partition of $G'$, for any graph $H$.



Now assume $k\ge 1$, and that the induction hypothesis holds for $k-1$. Let $Q=G_{s,k-1,w}$ be the graph obtained by induction. Let $N:= k^2(2s+1)(4s+5)w+1$. Create the graph $G$ by starting with $N$ disjoint copies $Q_1,\ldots,Q_N$ of $Q$.  Next add a vertex $v$ and, for each $i\in\{1,\ldots,N\}$ and each $u\in V(Q_i)$, add $N$ internally disjoint paths of length $2$ from $v$ to $u$.

First we show that $\tw(G)\le k$. If $k=1$ then $G$ is a subdivided star, which has treewidth 1. Now assume that $k\geq 2$. Begin with any tree-decomposition $(B_x:x\in V(T))$ of $Q_1\cup\ldots\cup Q_N$
with width at most $(k-1)$. Add $v$ to every bag $B_x$.  For all $i,j\in\{1,\ldots,N\}$ and for each vertex $u\in V(Q_i)$, choose a bag $B_x$ that contains $u$, and attach a leaf node adjacent to $x$ whose bag contains $v$, $u$, and the degree-2 vertex of the $j$-th length-2 path from $v$ to $u$.  Each bag of this decomposition has size at most $\max\{3,k+1\}\le k+1$ and therefore $\tw(G)\le k$.

Let $G'$ be a $(\le\!s)$-subdivision of $G$. Let $H$ be a graph and let $(\mathcal{H},\mathcal{L})$ be a layered $H$-partition of $G'$ with width at most $w$, where $\mathcal{H}:=(A_x:x\in V(H))$.  Since the radius of $G$ is $2$, the radius of $G'$ is at most $2s+2$.  Therefore $|\mathcal{L}|\le 4s+5$, and $|A_x|\leq (4s+5)w$ for each $x\in V(H)$.

Let $z$ be the unique node of $H$ such that $v\in A_z$, and let $Q'_1,\ldots,Q'_N$ be the (possibly subdivided) copies of $Q_1,\ldots,Q_N$ that appear in $G'$. So $A_z \cap V(Q'_i)\neq\emptyset$ for at most $(4s+5)w-1$ values of $i\in\{1,\dots,N\}$.  Since $N \ge (4s+5)w$, we have $V(Q'_i)\cap A_z=\emptyset$ for some $i\in\{1,\ldots,N\}$.

Let $H_i$ be the subgraph of $H$ induced by the nodes $\tau\in V(H)$ such that $A_{\tau}\cap V(Q'_i)\neq\emptyset$. So $(A_\tau \cap V(Q'_i): \tau\in V(H_i))$ defines a width-$w$ layered $H_i$-partition of $Q'_i$ (with respect to layering $\mathcal{L}$). By induction, there is a $K_{k}$ model $\{Y_1,\dots,Y_{k}\}$ in $G'$ that respects $(A_\tau\cap V(Q'):\tau \in V(H_i))$, and for each $j\in\{1,\dots,k\}$ we have $|V(Y_j)|\leq (k-1)(2s+1)+1 \leq k(2s+1)$ and $V(Y_j) \cap V(G)\neq\emptyset$. Let $y_j$ be a vertex in $V(Y_j)\cap V(G)$. Note that $|V(Y_1\cup\dots\cup Y_k)| \leq k^2(2s+1)$. Let $F:=\bigcup\{ A_x: A_x \cap V(Y_1\cup\dots\cup Y_k) \neq\emptyset\}$. So $|F| \leq k^2(2s+1)(4s+5)w$.

Since  $V(Q'_i)\cap A_z=\emptyset$, we have $v\not\in F$. Since $N>|F|$, for each $j\in\{1,\dots,k\}$, for at least one of the $N$ paths between $v$ and $y_j$ added in the construction of $G$, the corresponding path in $G'$ avoids $F$. Let $P_j$ be this path, not including $y_j$. Let $Y_{k+1}:=\bigcup\{ P_j: j \in \{1,\dots,k\}\}$. So $V(Y_{k+1}) \cap F=\emptyset$. By construction, there is an edge from $y_j$ to $Y_{k+1}$ for each $j\in\{1,\dots,k\}$. So $\{Y_1,\dots,Y_{k+1}\}$ is a $K_{k+1}$ model in $G'$. Since $\{Y_1,\dots,Y_{k}\}$ respects $(A_\tau\cap V(Q'):\tau \in V(H_i))$ and $V(Y_{k+1}) \cap F=\emptyset$, it follows that $\{Y_1,\dots,Y_{k+1}\}$ respects $(A_x:x\in V(H))$. By construction, $|V(Y_{k+1})| \leq k(2s+1)+1$ and $v\in V(Y_{k+1})\cap V(G)$. By assumption,
for each $j\in\{1,\dots,k\}$ we have $|V(Y_j)|\leq (k-1)(2s+1)+1$ and $V(Y_j) \cap V(G)\neq\emptyset$. This shows that $\{Y_1,\dots,Y_{k+1}\}$ is the desired $K_{k+1}$ model in $G'$.
\end{proof}

We now prove the following strengthening of \cref{sep-ltw-etw}. 
	
\begin{thm}
For each $k\in\N$ there is a graph $G$ with $\ltw(G)=1$ and $\rtw(G)=\tw(G)=k$.
\end{thm}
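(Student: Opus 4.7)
The plan is to combine \cref{TreewidthSubdivision} with \cref{LargeRowTreewidthSubdiv}: the first says that short subdivisions can drive the layered treewidth of a treewidth-$k$ graph down to $1$, while the second exhibits a treewidth-$k$ graph whose short subdivisions nonetheless retain row treewidth at least $k$. The two results are set up with matching stretch parameter $k$, so they can be composed directly.

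First I would apply \cref{LargeRowTreewidthSubdiv} with the given $k$ and with $s:=k$ to obtain a graph $G_0$ satisfying $\tw(G_0)\leq k$ and $\rtw(G')\geq k$ for every $(\leq\!k)$-subdivision $G'$ of $G_0$. Specialising to $G'=G_0$ (which is trivially a $(\leq\!k)$-subdivision of itself) gives $\rtw(G_0)\geq k$. Since $G_0$ is isomorphic to a subgraph of $G_0\boxtimes\PP$ (use a single vertex of $\PP$), we have $\rtw(G_0)\leq\tw(G_0)$, and therefore $\tw(G_0)=k$.

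Second, with $\tw(G_0)=k$ in hand, \cref{TreewidthSubdivision} yields a $(\leq\!k)$-subdivision $G$ of $G_0$ with $\ltw(G)=1$. Because treewidth is preserved under subdivision, $\tw(G)=\tw(G_0)=k$. Since $G$ is a $(\leq\!k)$-subdivision of $G_0$, \cref{LargeRowTreewidthSubdiv} gives $\rtw(G)\geq k$. Combined with the trivial bound $\rtw(G)\leq\tw(G)=k$, this yields $\rtw(G)=\tw(G)=k$ and $\ltw(G)=1$, as required.

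I do not anticipate a serious obstacle; the two preceding lemmas were constructed so that their stretch parameters align at $s=k$. The only small sanity check is that the subdivision $G$ produced by \cref{TreewidthSubdivision} is genuinely a $(\leq\!k)$-subdivision of $G_0$ (and hence in the scope of \cref{LargeRowTreewidthSubdiv}), which is immediate from the corollary's statement.
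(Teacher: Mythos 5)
Your proof is correct and follows essentially the same route as the paper: apply \cref{LargeRowTreewidthSubdiv} with $s=k$, then \cref{TreewidthSubdivision} to get a $(\le\!k)$-subdivision with layered treewidth $1$, and conclude via $\rtw \le \tw$ and the invariance of treewidth under subdivision. Your extra intermediate observation that $G_0$ is a $(\le\!k)$-subdivision of itself, giving $\tw(G_0)=k$ before subdividing, is a harmless (and slightly tidier) addition not needed in the paper's version.
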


\begin{proof}
By \cref{LargeRowTreewidthSubdiv} with $s=k$, there is a graph $G$ with $\tw(G)\leq k$ such that $\rtw(G')\geq k$ for every $(\le\!k)$-subdivision $G'$ of $G$. 
By \cref{TreewidthSubdivision}, there exists a $(\le\!k)$-subdivision $G'$ of $G$ with $\ltw(G')=1$. By definition, $\rtw(H) \leq \tw(H)$ for every graph $H$. It is well known and easily proved that $\tw(H)=\tw(H')$ for every subdivision $H'$ of $H$. Thus $k \leq \rtw(G')
 \leq \tw(G') = \tw(G) \leq k$, implying $\rtw(G)=\tw(G')=k$. 
\end{proof}

See \cite{GKRSS18,joret.pilipczuk.ea:two} for other examples where $O(\tw(G))$-subdivisions of graphs $G$ are used to prove lower bounds.

\section{Queue Layouts and Layered Treewidth}
\label{Queues}

The \defn{queue-number $\qn(G)$} of a graph $G$ is a well-studied graph parameter introduced by Heath, Leighton and Rosenberg~\cite{HLR92,HR92}; we omit the definition since we will not need it. Queue-number can be upper bounded in terms of layered treewidth and row treewidth. In particular, \citet{DMW17} proved that $n$-vertex graphs of bounded layered treewidth have queue-number in $O(\log n)$, while \citet{DJMMUW20} proved that graphs of bounded row treewidth have bounded queue-number. (Row treewidth was discovered as a tool for proving that planar graphs have bounded queue-number.)\ This is a prime example of a difference in behaviour between layered treewidth and row treewidth. Nevertheless, it is open whether graphs of bounded layered treewidth have bounded queue-number. We show that the answer to this question depends entirely on the case of layered treewidth 1. 

\begin{cor}
Graphs of bounded layered treewidth have bounded queue-number if and only if graphs of layered treewidth 1 have bounded queue-number. 
\end{cor}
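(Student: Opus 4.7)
The forward direction is immediate, since graphs of layered treewidth $1$ form a subclass of graphs of bounded layered treewidth, so any uniform queue-number bound for the latter specialises to the former.

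For the reverse direction, the plan is to reduce the case $\ltw(G)\le k$ to the case $\ltw(G')=1$ via a bounded subdivision using \cref{LayeredTreewidthSubdivision}(a), and then to transfer the queue-number bound back from $G'$ to $G$. Concretely, suppose that every graph of layered treewidth $1$ has queue-number at most some constant $c$, and let $G$ satisfy $\ltw(G)\le k$. Applying \cref{LayeredTreewidthSubdivision}(a) produces a $(\le 2k-2)$-subdivision $G'$ of $G$ with $\ltw(G')=1$, so by hypothesis $\qn(G')\le c$.

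The remaining task is to bound $\qn(G)$ as a function of $\qn(G')$ and $s=2k-2$. For this I would invoke a known ``unsubdivision'' bound on queue-number: for any graph $G$ and any $(\le s)$-subdivision $G'$ of $G$, $\qn(G) \le f(\qn(G'),s)$ for a fixed function $f$ (intuitively, each original edge is reconstructed from a uniformly bounded number of queues in $G'$). Applying this with $\qn(G')\le c$ and $s=2k-2$ yields $\qn(G)\le f(c,2k-2)$, a bound depending only on $k$, which is precisely ``bounded queue-number'' on the class of graphs of layered treewidth at most $k$.

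The main (and essentially only) obstacle is locating and correctly applying the unsubdivision inequality for queue-number; everything else is a one-line composition with \cref{LayeredTreewidthSubdivision}(a). One should also verify that the cited bound is stated for $(\le s)$-subdivisions (rather than only uniform $s$-subdivisions), which is a minor technicality since one can always further subdivide edges to reach exactly $s$ divisions without increasing queue-number.
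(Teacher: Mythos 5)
Your proposal is correct and takes essentially the same route as the paper: forward direction trivial, then \cref{LayeredTreewidthSubdivision}(a) to get a boundedly-subdivided $G'$ with $\ltw(G')=1$, then an ``unsubdivision'' bound to recover $\qn(G)$ from $\qn(G')$. The bound you gesture at does exist and is exactly what the paper cites, namely $\qn(H)\in O(\qn(H')^{2s})$ for $(\leq\! s)$-subdivisions \citep{DujWoo05}, improved to $O(\qn(H')^{s+1})$ in \citep{DMW19}, so your argument is complete once that citation is supplied.
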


\begin{proof}
The forward implication is immediate. Now assume that every graph of layered treewidth 1 has queue-number at most some constant $c$. 
Let $G$ be a graph with layered treewidth $k$. By \cref{LayeredTreewidthSubdivision}, $G$ has a $(\leq k)$-subdivision $G'$ with layered treewidth 1. So $\qn(G') \leq c$. \citet{DujWoo05} proved that for every graph $H$ and $(\leq s)$-subdivision $H'$ of $H$, we have $\qn(H)\in O( \qn(H')^{2s} )$. This bound was improved to $O( \qn(H')^{s+1} )$ in \citep{DMW19}. Thus $\qn(G)\leq O(c^{k+1})$, implying that graphs of bounded layered treewidth have bounded queue-number.
\end{proof}

This proof highlights the value of considering the behaviour of a graph parameter on subdivisions. An analogous result holds for nonrepetitive chromatic number $\pi(G)$ (using a result of \citet{NOW11} to bound $\pi(H)$ in terms of $\pi(H')$). 
\begin{cor}
Graphs of bounded layered treewidth have bounded nonrepetitive chromatic number if and only if graphs of layered treewidth 1 have bounded nonrepetitive chromatic number. 
\end{cor}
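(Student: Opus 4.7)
The plan is to mimic the proof of the queue-number corollary essentially verbatim, swapping $\qn$ for $\pi$ and the \citet{DujWoo05}/\citep{DMW19} subdivision bound for the analogous subdivision bound for nonrepetitive chromatic number from \citet{NOW11}. The forward direction is immediate since graphs of layered treewidth $1$ form a subclass of graphs of bounded layered treewidth, so I will focus on the reverse direction.

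For the reverse direction, suppose that every graph of layered treewidth $1$ has nonrepetitive chromatic number at most some constant $c$. Let $G$ be a graph with layered treewidth $k$. By \cref{LayeredTreewidthSubdivision}(a), there is a $(\le\!2k-2)$-subdivision $G'$ of $G$ with $\ltw(G')=1$, and therefore $\pi(G')\le c$ by assumption. The result of \citet{NOW11} states that for every graph $H$ and every $(\le\!s)$-subdivision $H'$ of $H$, $\pi(H)$ is bounded by a function of $\pi(H')$ and $s$. Applying this with $H=G$, $H'=G'$, and $s=2k-2$ gives that $\pi(G)$ is bounded by a function of $c$ and $k$ alone. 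Hence graphs of bounded layered treewidth have bounded nonrepetitive chromatic number.

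There is essentially no obstacle here beyond locating and citing the correct statement of the \citet{NOW11} bound and being precise about the quantifiers (``bounded'' means bounded by some function of the layered treewidth parameter, not by an absolute constant, exactly as in the queue-number corollary). The proof will be three or four sentences long and will parallel the preceding corollary closely enough that I will comment on this parallelism to justify the brevity.
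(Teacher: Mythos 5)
Your proposal is correct and matches the paper's intended argument exactly: the paper gives this corollary by noting it is "analogous" to the queue-number corollary, using the result of \citet{NOW11} to bound $\pi(G)$ in terms of $\pi(G')$ for a bounded subdivision $G'$ obtained from \cref{LayeredTreewidthSubdivision}. Your use of the $(\le\!2k-2)$-subdivision from \cref{LayeredTreewidthSubdivision}(a) is if anything slightly more careful than the paper's own phrasing, and the argument goes through as you describe.
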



\section{Separating Layered Pathwidth and Row Pathwidth}
\label{sep-pw}

Recall that \cref{sep-lpw-epw} asserts that for all $k\in\N$ there is a tree $T$ with $\lpw(T)=1$ and $\rpw(T) \geq k$. We first show that this theorem, in fact, follows from results in the literature. \citet{bannister.devanny.ea:track} noted that $\lpw(T)=1$ for every tree $T$. 
\citet{dvorak.huynh.ea:notes} showed that any family $\mathcal{G}$ of graphs with bounded row pathwidth has polynomial growth. The family of complete binary trees does not have polynomial growth, so for each $k\in\N$, there exists a complete binary tree $T$ with $\rpw(T)\ge k$. This proves \cref{sep-lpw-epw}.

We now prove the following stronger result.

\begin{thm}
\label{tree-lower-bound}
For every $k\in\N$ there exists a tree $T$ with $\pw(T)=\rpw(T)=k$.
\end{thm}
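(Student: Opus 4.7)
The plan is to prove \cref{tree-lower-bound} by constructing, for each $k\in\N$, a tree $T$ with $\pw(T)\le k$ and $\rpw(T)\ge k$; since $\rpw(G)\le\pw(G)$ for every graph $G$ (take $H=G$ in the definition of $\rpw$), these two bounds force $\pw(T)=\rpw(T)=k$.

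For the construction I would adapt the recursion in \cref{IndHyp} to the tree/pathwidth setting. Define $T_{0,w}$ to be a single vertex, and $T_{k,w}$ to be the rooted tree obtained from a new root $r$ by joining it by a single edge to the root of each of $N_k$ disjoint copies of $T_{k-1,w}$, where $N_k$ is a large integer (depending on $k$ and $w$) to be fixed below. An easy induction gives $\pw(T_{k,w})\le k$: concatenate path-decompositions of width at most $k-1$ of the $N_k$ subtrees into one long sequence of bags and add $r$ to every bag. For the matching lower bound, note that for $N_k\ge 3$ the tree $T_{k,w}$ contains as a subgraph the complete ternary tree $B_k$, defined by $B_0=K_1$ and $B_k=$~a root joined to three copies of $B_{k-1}$; since $\pw(B_k)=k$ (the classical Ellis--Sudborough--Turner theorem on pathwidth of trees) and pathwidth is subgraph-monotone, $\pw(T_{k,w})=k$.

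The heart of the argument is a pathwidth analogue of \cref{IndHyp}: for every $k,w$ and every layered $H$-partition $(A_x:x\in V(H))$ of $T_{k,w}$ with layered width at most $w$, letting $z$ be the node of $H$ with $r\in A_z$, the graph $H$ contains a $B_k$ model $\{X_\sigma:\sigma\in V(B_k)\}$ in which the root model node $X_\rho$ contains $z$, and with $\sum_\sigma|V(X_\sigma)|\le F(k,w)$ for an explicit bound $F$. I would prove this by induction on $k$; the base case is immediate. For the inductive step, the radius of $T_{k,w}$ is $k$, so any layering has at most $2k+1$ nonempty layers and $|A_x|\le (2k+1)w$ for every $x\in V(H)$; hence $A_z$ intersects at most $(2k+1)w-1$ of the $N_k$ subtrees of $r$. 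For each remaining ``good'' subtree $S_i$, applying induction to the restriction of the $H$-partition yields a $B_{k-1}$ model $M_i$ of total size at most $F(k-1,w)$ whose root model node contains the bag $z_i$ of the root $r_i$ of $S_i$; and the edge $rr_i$ of $T_{k,w}$ forces $zz_i\in E(H)$. Because each vertex of $H$ has a bag meeting at most $(2k+1)w$ distinct subtrees, each already-chosen model blocks at most $F(k-1,w)\cdot(2k+1)w$ further subtrees from being disjoint from it, so a greedy selection of three good subtrees $S_{i_1},S_{i_2},S_{i_3}$ with pairwise disjoint models succeeds provided $N_k\ge 1+(2k+1)w(1+3F(k-1,w))$. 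Since $V(S_{i_l})\cap A_z=\emptyset$ for each chosen $l$, none of $M_{i_1},M_{i_2},M_{i_3}$ contains $z$; setting $X_\rho:=\{z\}$ and attaching $M_{i_1},M_{i_2},M_{i_3}$ as the subtrees at the three children of $\rho$ yields the required $B_k$ model, closing the recursion at $F(k,w):=1+3F(k-1,w)=O(3^k)$.

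Setting $w=1$ and $T:=T_{k,1}$, the lemma together with \cref{ProductPartition} gives $\pw(H)\ge\pw(B_k)=k$ for every $H$ admitting a layered $H$-partition of $T$ of width $1$, so $\rpw(T)\ge k$ and hence $\pw(T)=\rpw(T)=k$. The main obstacle is formulating a sufficiently strong inductive hypothesis --- tracking not only the existence of the $B_k$ minor in $H$ but also the location of its root (at the bag of $r$) and a bound on the total number of $H$-vertices used by the model --- so that the greedy selection of three pairwise-disjoint child models goes through and the recursion for $N_k$ closes at an explicit (though exponential) value.
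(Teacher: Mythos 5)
Your proof is correct and follows essentially the same route as the paper: the paper takes $T=T_{d,k}$ (the complete $d$-ary tree of height $k$) with $d$ large and proves by induction on height that any $H$-partition of layered width $w$ forces a copy of $T_{3,k}$ in $H$ rooted at the node whose part contains the root, then concludes via \cref{ProductPartition} and the folklore fact $\pw(T_{3,k})=k$. Your only deviations are cosmetic: you extract a ternary-tree \emph{minor model} with an explicit size bound carried in the induction hypothesis and choose disjoint child models greedily, whereas the paper finds $T_{3,k}$ as a \emph{subgraph} (so the size bound is automatic) and obtains disjointness by colouring the intersection graph of the recursively found subtrees.
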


Let $T_{d,h}$ be the complete $d$-ary tree of height $h$.
It is folklore that $\pw(T_{d,h})=h$ for all $d\geq 3$ (see \cite{Scheffler90,EST-IC94,Sch92}). So \cref{ProductPartition,tree-lemma} with $\ell=3$ and $w=1$ implies \cref{tree-lower-bound}.

\begin{lem}
\label{tree-lemma}
For all $h\in\N_0$ and $w,\ell\in\N$ there exists $d\in\N$ such that for every graph $H$ and every $H$-partition $(A_x:x\in V(H))$ of $T_{d,h}$ with layered width at most $w$, the graph $H$ contains a subgraph isomorphic to $T_{\ell,h}$. Moreover, if $r$ is the root vertex of $T_{d,h}$ and $r\in A_z$, then $z$ is the root vertex of a subgraph of $H$ isomorphic to $T_{\ell,h}$.
\end{lem}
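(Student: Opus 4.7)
The plan is to induct on $h$. The base case $h=0$ is trivial: $T_{d,0}$ is a single vertex $r$ for any $d\in\N$, the $H$-partition assigns $r$ to a bag $A_z$, and $z$ itself is the required copy of $T_{\ell,0}$ rooted at $z$. For the inductive step, fix $h\ge 1$, suppose the lemma holds for height $h-1$ with some threshold $D(h-1)$, and take $d$ large (to be fixed below). Let $r$ be the root of $T_{d,h}$, let $c_1,\ldots,c_d$ be its children, and let $T_i$ denote the subtree rooted at $c_i$, which is isomorphic to $T_{d,h-1}$. Fix a layering $\mathcal{L}=(L_0,L_1,\ldots)$ witnessing that the $H$-partition has layered width at most $w$, and write $r\in A_z$ and $c_i\in A_{z_i}$.

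The key observation is that the layering gives a uniform bound on bag sizes. Since $T_{d,h}$ has diameter $2h$ and the endpoints of every edge differ by at most one in layer index, the vertices of $T_{d,h}$ lie in at most $2h+1$ consecutive layers, so $|A_y|\le(2h+1)w$ for every $y\in V(H)$. In particular, $A_z$ meets at most $(2h+1)w-1$ of the subtrees $T_i$ (since $r\in A_z$ consumes one slot), so for at least $d-(2h+1)w+1$ indices $i$ we have $A_z\cap V(T_i)=\emptyset$. For any such $i$, we have $c_i\notin A_z$, so $z_i\neq z$, and the edge $rc_i$ forces $z_i\in N_H(z)$; moreover, any subgraph of $H$ whose bags are confined to $T_i$ will automatically avoid $z$. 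Since the children of $r$ all lie in the three layers containing or adjacent to $r$'s layer, each bag contains at most $3w$ of them, so at least $D^{*}:=(d-(2h+1)w+1)/(3w)$ distinct neighbours $x_1,\ldots,x_{D^{*}}$ of $z$ arise as $z_k$ for subtrees $T_k$ disjoint from $A_z$. Provided $d\ge D(h-1)$, apply the inductive hypothesis to each such $T_k$ (with its restricted $H$-partition and layering) to obtain a subgraph $U_k$ of $H$ isomorphic to $T_{\ell,h-1}$, rooted at $x_k$ and living inside $H_k:=H[\{y\in V(H):A_y\cap V(T_k)\neq\emptyset\}]$, which by construction excludes $z$.

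It remains to find $\ell$ pairwise-disjoint $U_k$'s. This is the main technical step, and it again uses the bag-size bound: since $|A_y|\le(2h+1)w$ and the subtrees $T_k$ are vertex-disjoint, any fixed $y\in V(H)$ lies in $V(H_k)$---and hence in $V(U_k)$---for at most $(2h+1)w$ indices $k$. Writing $t:=|V(T_{\ell,h-1})|$, each accepted $U_k$ blocks at most $t(2h+1)w$ further indices; provided $D^{*}\ge\ell t(2h+1)w$, a greedy selection produces disjoint $U_{k_1},\ldots,U_{k_\ell}$. Combining $z$, the edges $zx_{k_i}\in E(H)$, and these subgraphs yields the required copy of $T_{\ell,h}$ in $H$ rooted at $z$, and one may take $D(h):=\max\{D(h-1),\,(2h+1)w-1+3w\ell t(2h+1)w\}$. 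The main obstacle is the joint requirement of keeping the $\ell$ inductive copies of $T_{\ell,h-1}$ pairwise disjoint \emph{and} disjoint from the root $z$: discarding subtrees $T_k$ that meet $A_z$ handles the second, while the bag-size bound $|A_y|\le(2h+1)w$ obtained from the layering is precisely what makes the disjointness pigeonhole work.
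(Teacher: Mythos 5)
Your proof is correct and follows essentially the same route as the paper: induction on $h$, the bag-size bound $|A_y|\le w(2h+1)$ from the diameter, discarding the subtrees meeting $A_z$, applying the inductive hypothesis to the remaining subtrees, and then extracting $\ell$ pairwise-disjoint copies of $T_{\ell,h-1}$ by a counting argument. The only (cosmetic) difference is that the paper phrases this last extraction via the intersection graph of the copies and a bound on its chromatic number, whereas you use an equivalent greedy/blocking count (plus a harmless extra step making the roots $z_k$ distinct).
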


\begin{proof}
We proceed by induction on $h$. The case $h=0$ is trivial. Now assume that $h\geq 1$.
We may assume that the number of layers is at most the diameter of $T_{d,h}$ plus 1, which equals $2h+1$. So $|A_x|\leq w(2h+1)$ for each $x\in V(H)$. Let $r_1,\dots,r_d$ be the children of $r$ in $T_{d,h}$. Let $T^1,\dots,T^d$ be the copies of $T_{d,h-1}$ in $T_{d,h}$, where $T^i$ is rooted at $r_i$. At most $w(2h+1)-1$ of $T^1,\dots,T^d$ intersect $A_z$. Without loss of generality,  $T^1,\dots,T^{d-w(2h+1)+1}$ do not intersect $A_z$.
For each $i\in\{1,\dots,d-w(2h+1)+1\}$, let $z_i$ be the vertex of $H$ such that $r_i\in A_{z_i}$. Since $T^i\cap A_z=\emptyset$, we have $z_i\neq z$. Since $rr_i\in E(T_{d,h})$, we have $zz_i\in E(H)$. By induction, $H-z$ contains a subgraph $S^i$ isomorphic to $T_{\ell,h-1}$ rooted at $z_i$. Let $X$ be the intersection graph of $S^1,\dots,S^{d-w(2h+1)+1}$. If $S^i$ and $S^j$ intersect in node $y$ of $H$, then $T^i$ and $T^j$ both intersect $A_y$. Since $|V(S^i)| \leq (\ell+1)^{h-1}$ and $|A_y|\leq w(2h+1)$, $X$ has maximum degree $\Delta(X)\leq w(2h+1)(\ell+1)^{h}$. Thus $\chi(X)\leq w(2h+1)(\ell+1)^{h}+1$. For sufficiently large $d$, we have $|V(X)|>(\ell-1)\chi(X)$. Thus, in any $\chi(X)$-colouring of $X$, some colour class has at least $\ell$ vertices. Without loss of generality, $S^1,\dots,S^\ell$ are pairwise-disjoint. Hence, $S_1\cup\dots\cup S_\ell$ along with $z$ forms a subgraph of $H$ isomorphic to $T_{\ell,h}$, rooted at $z$, as desired.
\end{proof}

We finish with an open problem: what is $\rpw(T_{2,h})$? It follows from a result of
\citet{dvorak.huynh.ea:notes} that $\rpw(T_{2,h})\in\Omega(\frac{h}{\log h})$. Is this tight, or is $\rpw(T_{2,h})\in\Omega(h)$? Obviously $\rpw(T_{2,h})\leq\pw(T_{2,h})=\ceil{\frac{h}{2}}$.

\acknowledgements
Thanks to Robert Hickingbotham and to both referees for several helpful comments. 

\let\oldthebibliography=\thebibliography
\let\endoldthebibliography=\endthebibliography
\renewenvironment{thebibliography}[1]{%
	\begin{oldthebibliography}{#1}%
		\setlength{\parskip}{0ex}%
		\setlength{\itemsep}{0ex}%
	}{\end{oldthebibliography}}


\end{document}